\documentclass[11pt]{article}

\usepackage{amsmath,amsfonts,amssymb,amsthm,graphics,amscd}

\newtheorem{theorem}{Theorem}[section]
\newtheorem{lemma}{Lemma}[section]
\newtheorem{corollary}{Corollary}[section]

\theoremstyle{definition}

\def\A{{\cal A}}
\def\sbmatrix{\left[\begin{array}}
\def\endsbmatrix{\end{array}\right]}

\begin{document}

\title{On the generalized Drazin inverse of
the sum in a Banach algebra}

\author{Dijana Mosi\' c\footnote{The author is supported by the Ministry of Education and Science,
Republic of Serbia, grant no. 174007.},
Daochang Zhang\footnote{The corresponding author is supported by NSFC (No. 11371165; No. 11401249),
the Scientific and Technological Development Program Foundation of Jilin
Province, China (No. 20170520052JH; No. 20130522097JH),
the Doctor Research Start-up Fund of Northeast Electric Power University (No. BSJXM-201525).
}}

\date{}

\maketitle

\begin{abstract}
The objective of this paper is to study the existence of the
generalized Drazin inverse of the sum $a+b$ in a Banach algebra
and present explicit expressions for the generalized Drazin
inverse of this sum, under new conditions.

\medskip

{\it Key words and phrases\/}: generalized Drazin inverse,
additive properties, Banach algebras.

2010 {\it Mathematics subject classification\/}: 46H05, 47A05,
15A09.
\end{abstract}


\section{Introduction}

Throughout this paper, $\A$ will denote a complex unital Banach
algebra with unit 1. We will use $\A^{-1}$, $\A^{nil}$ and
$\A^{qnil}$ to denote the sets of all invertible, nilpotent and
quasinilpotent elements of $\A$, respectively.

The generalized Drazin inverse of $a\in\A$ (or Koliha--Drazin
inverse of $a$ \cite{K3}) is the unique element $a^d\in\A$ which
satisfies
$$a^daa^d=a^d,\qquad aa^d=a^da,\qquad a-a^2a^d\in\A^{qnil}.$$
The set of all generalized Drazin invertible elements of $\A$ will
be denoted by $\A^d$. For $a\in\A^d$, $a^\pi=1-aa^d$ is the
spectral idempotent of $a$ corresponding to the set $\{0\}$. The
Drazin inverse is a special case of the generalized Drazin inverse
for which $a-a^2a^d\in\A^{nil}$.

If $p\in\A$ is an idempotent, we represent any element $a\in\A$ as
$$a=\sbmatrix{cc} a_{11}&a_{12}\\a_{21}&a_{22}\endsbmatrix,$$
where $a_{11}=pap$, $a_{12}=pa(1-p)$, $a_{21} = (1-p)ap$, $a_{22}=
(1-p)a(1-p)$.

Let $a\in\A^d$. Then we can write
$$a=\sbmatrix{cc}
a_{1}&0\\0&a_{2}\endsbmatrix$$ relative to $p=aa^d$, where
$a_{1}\in(p\A p)^{-1}$ and $a_{2}\in((1-p)\A(1-p))^{qnil}$. The
generalized Drazin inverse of $a$ is given by
$$a^d=\sbmatrix{cc} a^d&0\\0&0\endsbmatrix=\sbmatrix{cc}
a_1^{-1}&0\\0&0\endsbmatrix.$$

Now, we state very useful result on the generalized Drazin inverse
of a triangular block matrix.

\begin{lemma}{\rm \cite[Theorem 2.3]{CGK}} \label{tetriangD}
Let $x=\sbmatrix{cc} a&0\\c&b\endsbmatrix\in\A$ relative to the
idempotent $p\in\A$ and let $y=\sbmatrix{cc}
b&c\\0&a\endsbmatrix\in\A$ relative to the idempotent $1-p$.

{\rm (i)} If $a\in(p\A p)^d$ and $b\in((1-p)\A (1-p))^d$, then
$x,y\in\A^d$ and
$$x^d=\sbmatrix{cc} a^d&0\\u&b^d\endsbmatrix, \qquad y^d=\sbmatrix{cc}
b^d&u\\0&a^d\endsbmatrix,$$ where
$$u=\sum\limits_{n=0}^{\infty}(b^d)^{n+2}ca^na^\pi+
\sum\limits_{n=0}^{\infty}b^\pi b^{n}c(a^d)^{n+2}-b^dca^d.$$

{\rm (ii)} If $x\in\A^d$ and $a\in(p\A p)^d$, then $b\in((1-p)\A
(1-p))^d$ and $x^d$ is given as in part {\rm (i)}.
\end{lemma}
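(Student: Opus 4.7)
The plan is direct verification: check that the proposed $x^d$ satisfies the three defining axioms $x^d x x^d = x^d$, $xx^d = x^d x$, and $x - x^2 x^d \in \A^{qnil}$, with the formula for $u$ motivated by the commutation axiom. Computing $xx^d$ and $x^d x$ in $2 \times 2$ block form shows that commutation is equivalent to the Sylvester-type equation $bu - ua = b^d c - c a^d$. To solve this, I would refine $p$ further by splitting $a = a_1 \oplus a_2$ and $b = b_1 \oplus b_2$ into their invertible and quasinilpotent parts (via the core--nilpotent decompositions associated with $aa^d$ and $bb^d$). The Sylvester equation then breaks into four corner equations: the two ``mixed'' corners (invertible versus quasinilpotent) are solved by convergent geometric-type series, producing the summands $\sum_{n \geq 0}(b^d)^{n+2} c a^n a^\pi$ and $\sum_{n \geq 0} b^\pi b^n c (a^d)^{n+2}$, while the invertible/invertible corner accounts for the $-b^d c a^d$ term. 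Convergence of both series in norm follows from the quasinilpotency of $aa^\pi$ and $bb^\pi$.

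With a candidate $u$ in hand, I would verify the commutation by substituting into $bu - ua$ and telescoping: left multiplication by $b$ and right multiplication by $a$ each shift the index in the series, so successive terms cancel in pairs and the remainder is exactly $b^d c - c a^d$. The identity $x^d x x^d = x^d$ then follows from the commutation together with $a^d a a^d = a^d$ and $b^d b b^d = b^d$. The quasinilpotent axiom reduces to showing that $x - x^2 x^d$, a lower-triangular block whose diagonal entries are $a - a^2 a^d$ and $b - b^2 b^d$, is itself quasinilpotent; this is the standard fact that a $2 \times 2$ triangular block in a Banach algebra with quasinilpotent diagonal entries is quasinilpotent.

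For part (ii), given $x \in \A^d$ with $a$ Drazin invertible, I would write $x^d$ as a general $2 \times 2$ block and extract information compartmentally from $xx^d = x^d x$ and $x^d x x^d = x^d$. The $(1,1)$-corner analysis, together with uniqueness of the generalized Drazin inverse on $p \A p$, pins down the $(1,1)$-block of $x^d$ as $a^d$ and forces the $(1,2)$-block to vanish. The $(2,2)$-block then becomes the candidate for $b^d$, whose three defining properties follow by projecting those of $x^d$ to $(1-p)\A(1-p)$ and invoking the block-quasinilpotency lemma in the reverse direction: the $(2,2)$-corner of the quasinilpotent element $x - x^2 x^d$ is quasinilpotent. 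The case of $y$ is symmetric, obtained by relabelling $p \leftrightarrow 1-p$.

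The main obstacle is the commutation verification: the telescoping of the infinite series together with careful bookkeeping of the spectral idempotents $a^\pi$ and $b^\pi$ is where the technical content lives. A secondary but necessary ingredient is the ``quasinilpotent diagonal implies quasinilpotent triangular block'' lemma, used in both directions for parts (i) and (ii).
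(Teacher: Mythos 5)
The paper itself does not prove this lemma (it is quoted from \cite{CGK}), so the comparison is with the standard proof there. Your plan for part (i) is essentially that proof and is sound: the two series converge by the root test because $r(aa^\pi)=r(bb^\pi)=0$, the telescoping indeed yields $bu-ua=b^dc-ca^d$ (using $b(b^d)^{n+2}=(b^d)^{n+1}$ and $(a^d)^{n+2}a=(a^d)^{n+1}$), and $x-x^2x^d$ is block lower triangular with quasinilpotent diagonal blocks, hence quasinilpotent. One gloss: $x^dxx^d=x^d$ does \emph{not} follow formally from commutation plus $a^daa^d=a^d$ and $b^dbb^d=b^d$; its $(2,1)$ entry requires a second computation with the explicit series for $u$, using identities such as $a^\pi a^d=0$ and $b^dbb^\pi=0$. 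That computation is routine, but it has to be carried out.

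The genuine gap is in part (ii). Writing $\alpha=px^dp$, $\beta=px^d(1-p)$, $\gamma=(1-p)x^dp$, $\delta=(1-p)x^d(1-p)$, the corner equations coming from the three axioms do not decouple: commutation gives $a\alpha-\alpha a=\beta c$ and $a\beta=\beta b$, the inner identity couples $\alpha,\beta,\gamma$, and quasinilpotency of $x-x^2x^d$ says nothing about its $(1,1)$ or $(2,2)$ corners unless that element is already block triangular, i.e.\ unless $\beta=0$. So you cannot ``pin down'' $\alpha=a^d$ by uniqueness in $p\A p$, nor extract quasinilpotency of $b-b^2\delta$, before you have proved $\beta=0$ --- and your sketch gives no mechanism for that, nor any place where the hypothesis $a\in(p\A p)^d$ actually enters. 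It must enter: for an invertible block lower triangular element whose diagonal blocks are not invertible in the corner algebras (shift-type operator examples), $x^d=x^{-1}$ is \emph{not} block triangular, so triangularity of $x^d$ is not a formal consequence of the axioms. The standard argument runs in the opposite order: first show $b\in((1-p)\A(1-p))^d$, e.g.\ by the Schur-type factorization $x-\lambda=\bigl(p+c(a-\lambda p)^{-1}+(1-p)\bigr)\bigl((a-\lambda p)+(b-\lambda(1-p))\bigr)$ valid for $\lambda\neq 0$ with $\lambda\notin\sigma_{p\A p}(a)\cup\sigma(x)$, which gives $\sigma_{(1-p)\A(1-p)}(b)\subseteq\sigma_{p\A p}(a)\cup\sigma(x)\cup\{0\}$, so $0$ is not an accumulation point of $\sigma(b)$ and Koliha's characterization applies; only then invoke part (i) and uniqueness of the generalized Drazin inverse to conclude that $x^d$ has the stated triangular form. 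Your part (ii) needs to be replaced or completed by an argument of this kind.
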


For $a,b\in\A^d$, it is possible that the generalized Drazin
inverse of $a+b$ does not exist or, if $(a+b)^d$ exists, then we
do not always know how to calculate $(a+b)^d$ in terms of $a$,
$b$, $a^d$, $b^d$. Many authors have investigated some special
cases of this problem \cite{CGK,DjW}, but we present some of them
which will be used later.

\begin{lemma}{\rm \cite{MDbmmss-add}}\label{te-add-gD1} If $a\in\A^{qnil}$, $b\in\A^d$ and $ab=b^\pi bab^\pi$, then  $a+b\in\A^d$ and
$$(a+b)^d=\sum_{n=0}^{\infty}(b^d)^{n+1}a^n.$$
\end{lemma}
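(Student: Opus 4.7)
The strategy is to work relative to the spectral idempotent $p=bb^d$, writing
$$b=\sbmatrix{cc} b_1 & 0 \\ 0 & b_2 \endsbmatrix,\qquad b^\pi=1-p,$$
with $b_1\in(p\A p)^{-1}$ and $b_2\in((1-p)\A(1-p))^{qnil}$. Write $a=[a_{ij}]$ in the same $2\times 2$ block form. Expanding both sides of the hypothesis $ab=b^\pi b a b^\pi$ in blocks yields the four identities
$$a_{11}b_1=0,\qquad a_{12}b_2=0,\qquad a_{21}b_1=0,\qquad a_{22}b_2=b_2a_{22}.$$
Since $b_1$ is invertible in $p\A p$, the first and third kill $a_{11}$ and $a_{21}$, and hence
$$a+b=\sbmatrix{cc} b_1 & a_{12} \\ 0 & a_{22}+b_2 \endsbmatrix.$$

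The top-left entry $b_1$ is invertible, and I would next check that $a_{22}+b_2$ is quasinilpotent in $(1-p)\A(1-p)$: the element $a_{22}$ inherits quasinilpotency from $a\in\A^{qnil}$ via the inequality $\|a_{22}^n\|\le\|1-p\|^{2}\|a^n\|$, the element $b_2$ is quasinilpotent by construction, and the two commute by the fourth identity above, so their sum is quasinilpotent. With this block picture, Lemma \ref{tetriangD}(i), applied with respect to the idempotent $1-p$, produces $a+b\in\A^d$ together with the explicit formula
$$(a+b)^d=\sbmatrix{cc} b_1^{-1} & u \\ 0 & 0 \endsbmatrix,\qquad u=\sum_{n=0}^{\infty}b_1^{-(n+2)}a_{12}(a_{22}+b_2)^n,$$
the two remaining sums in Lemma \ref{tetriangD} collapsing because $b_1^\pi=0$ and $(a_{22}+b_2)^d=0$.

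It remains to match this formula with $\sum_{n=0}^{\infty}(b^d)^{n+1}a^n$. Using $a_{12}b_2=0$ together with $a_{22}b_2=b_2a_{22}$ (which gives $a_{22}^n b_2=b_2a_{22}^n$), a short induction shows that $a_{12}(a_{22}+b_2)^n=a_{12}a_{22}^n$ for every $n\ge 0$, so $u=\sum_{n=0}^{\infty}b_1^{-(n+2)}a_{12}a_{22}^n$. On the other hand, a direct block calculation gives $a^n=\sbmatrix{cc} 0 & a_{12}a_{22}^{n-1} \\ 0 & a_{22}^n \endsbmatrix$ for $n\ge 1$ and $(b^d)^{n+1}=\sbmatrix{cc} b_1^{-(n+1)} & 0 \\ 0 & 0 \endsbmatrix$, and summing produces exactly the block matrix already obtained for $(a+b)^d$. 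The main obstacle is careful bookkeeping with the two block frames (relative to $p$ and $1-p$); the genuinely substantive algebraic step is the induction that erases the $b_2$'s from $u$, which is precisely where the hypothesis $a_{12}b_2=0$ enters.
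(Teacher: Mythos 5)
Your proof is correct: relative to $p=bb^d$ the hypothesis $ab=b^\pi bab^\pi$ does force $a_{11}=a_{21}=0$, $a_{12}b_2=0$ and $a_{22}b_2=b_2a_{22}$; the corner $a_{22}+b_2$ is quasinilpotent (your norm inequality is valid precisely because $a_{21}=0$ makes $(1-p)a^n(1-p)=a_{22}^n$); and Lemma \ref{tetriangD} together with the induction that erases the $b_2$'s yields exactly $\sum_{n=0}^{\infty}(b^d)^{n+1}a^n$. Note that this paper gives no proof of the lemma---it is quoted from \cite{MDbmmss-add}---and your block-triangularization relative to $bb^d$ is exactly the standard argument used in that source, so the approaches essentially coincide.
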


\begin{lemma}{\rm \cite{DWadd}}\label{le-add-DW} If $a, b\in\A^d$ and $ab=ba$, then  $a+b\in\A^d$
if and only if $1+a^db\in\A^d$. In this case, we have
$$(a+b)^d=a^d(1+a^db)^dbb^d+b^\pi\sum_{n=0}^{\infty}(-b)^{n}(a^d)^{n+1}+\sum_{n=0}^{\infty}(b^d)^{n+1}(-a)^na^\pi.$$
\end{lemma}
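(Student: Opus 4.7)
The natural approach is to decompose $\A$ into four corners using the two mutually commuting idempotents $p = aa^d$ and $r = bb^d$ (they commute because $a^d$ and $b^d$ each commute with both $a$ and $b$, since the generalized Drazin inverse lies in the double commutant of its element). Writing $e_{11} = pr$, $e_{12} = p(1-r)$, $e_{21} = (1-p)r$, $e_{22} = (1-p)(1-r)$, both $a$ and $b$ become block-diagonal with four blocks: $a = \alpha'_1 \oplus \alpha'_2 \oplus \alpha_1 \oplus \alpha_2$, with $\alpha'_1, \alpha'_2$ invertible in their corners and $\alpha_1, \alpha_2$ quasinilpotent; and $b = \beta'_1 \oplus \beta'_2 \oplus \beta_1 \oplus \beta_2$, with $\beta'_1, \beta_1$ invertible and $\beta'_2, \beta_2$ quasinilpotent. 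Within each corner the $a$-block and $b$-block still commute.

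In three of the four corners the sum is automatically well-behaved: $\alpha'_2 + \beta'_2 = \alpha'_2(e_{12} + (\alpha'_2)^{-1}\beta'_2)$ is invertible because the commuting product $(\alpha'_2)^{-1}\beta'_2$ has spectrum contained in $\sigma((\alpha'_2)^{-1})\cdot\sigma(\beta'_2) = \{0\}$, hence is quasinilpotent; $\alpha_1 + \beta_1$ is invertible for the symmetric reason; and $\alpha_2 + \beta_2$ is quasinilpotent as the sum of two commuting quasinilpotents. Only in the $e_{11}$-corner is a hypothesis needed: $\alpha'_1 + \beta'_1 = \alpha'_1(e_{11} + (\alpha'_1)^{-1}\beta'_1)$ is generalized Drazin invertible iff its second factor is, and that second factor is exactly the $e_{11}$-block of $1 + a^db$, whose other blocks ($e_{12} + (\alpha'_2)^{-1}\beta'_2$ on $e_{12}$, and the corner identities on $e_{21}$ and $e_{22}$) are either invertible or idempotent, hence automatically generalized Drazin invertible. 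This proves $a + b \in \A^d$ iff $1 + a^db \in \A^d$.

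For the formula, each of the three summands on the right-hand side contributes to exactly one of the three nonzero blocks of $(a + b)^d$. The first summand $a^d(1 + a^db)^d bb^d$ is supported on $pr = e_{11}$ (the intersection of the supports of $a^d$ and $bb^d$), where it collapses to $(\alpha'_1)^{-1}(e_{11} + (\alpha'_1)^{-1}\beta'_1)^d = (\alpha'_1 + \beta'_1)^d$. The second summand $b^\pi\sum_{n\ge 0}(-b)^n(a^d)^{n+1}$ is supported on $(1-r)p = e_{12}$, where it reduces to the Neumann series $\sum_{n\ge 0}(-1)^n(\beta'_2)^n(\alpha'_2)^{-n-1} = (\alpha'_2 + \beta'_2)^{-1}$. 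The third summand $\sum_{n\ge 0}(b^d)^{n+1}(-a)^n a^\pi$ is supported on $r(1-p) = e_{21}$, where it reduces to $\sum_{n\ge 0}(-1)^n\beta_1^{-n-1}\alpha_1^n = (\alpha_1 + \beta_1)^{-1}$. Since $(a + b)^d$ is zero on $e_{22}$ (as $\alpha_2 + \beta_2$ is quasinilpotent), summing the three contributions reproduces $(a + b)^d$.

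The main obstacle is the bookkeeping of this four-corner decomposition: one must carefully track which of $a, b, a^d, b^d, a^\pi, b^\pi$ is supported on which corners and identify each Neumann series with the correct corner inverse. A cleaner alternative, avoiding the fine decomposition, is to set $x$ equal to the right-hand side of the proposed formula and verify directly the three defining identities $x(a+b)x = x$, $x(a+b) = (a+b)x$, and $(a+b) - (a+b)^2 x \in \A^{qnil}$, using throughout the mutual commutativity of $a, b$ and all their generalized Drazin inverses.
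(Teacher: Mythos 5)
This lemma is not proved in the paper at all: it is quoted from Deng and Wei \cite{DWadd}, so there is no in-paper argument to measure yours against. Your four-corner decomposition is a correct reconstruction, and it is essentially the standard route, done in one step instead of the usual two (split relative to $aa^d$ first, then split the corner where $a$ is invertible relative to $bb^d$). The corner bookkeeping checks out: mutual commutativity of $a,b,a^d,b^d$ follows from the double-commutant property of the generalized Drazin inverse, the $e_{11}$-block of $1+a^db$ is $e_{11}+(\alpha_1')^{-1}\beta_1'$ while its other blocks are invertible in their corners, and the three summands of the formula are supported exactly on $e_{11}$, $e_{12}$, $e_{21}$ and reduce there to $(\alpha_1'+\beta_1')^d$, $(\alpha_2'+\beta_2')^{-1}$, $(\alpha_1+\beta_1)^{-1}$, with the $e_{22}$-corner contributing $0$. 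If you write this up in full, three routine points deserve explicit justification: (i) convergence of the two series, which should be read with the idempotent inside the sum, e.g. $b^\pi(-b)^n(a^d)^{n+1}=(-1)^n(b^\pi b\,a^d)^n a^d$ and $\rho(b^\pi b\,a^d)\le\rho(b^\pi b)\rho(a^d)=0$, so the terms decay superexponentially; (ii) the fact that for orthogonal idempotents $e_i$ summing to $1$ and commuting with $x$, one has $x\in\A^d$ if and only if each $e_ix\in(e_i\A e_i)^d$, with $x^d=\sum_i(e_ix)^d$ — this is what converts your corner-by-corner analysis into both the equivalence and the formula; (iii) the commuting-factor rule $(u+v)^d=u^{-1}(e+u^{-1}v)^d$ for $u$ invertible in the corner and commuting with $v$, used in the $e_{11}$-corner. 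Your suggested alternative of verifying the three defining identities directly for the right-hand side would also work, but is more computational and not what the cited source does.
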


Using the assumptions $a^\pi b=b$ and $aba^\pi=0$, the
representation for $(a+b)^d$ was presented in \cite{CGK}. In
\cite{LQB}, the formula for $(a+b)^d$ was given under conditions
which involve $aba^\pi=0$.


In this paper, we give explicit expressions for the generalized
Drazin inverse of $a+b$ in the cases that $aba^\pi=a^\pi b^\pi
bab^\pi a^\pi$ or $aba^\pi=a^\pi baa^\pi$. So, under new
conditions in the literature, the paper studies additive
properties of the generalized Drazin inverse in a Banach algebra.

\section{Additive results}

In the first theorem of this section, for $a,b\in\A^d$ such that
$aba^\pi=a^\pi b^\pi bab^\pi a^\pi$, we investigate the existence
of the generalized Drazin inverse of the sum $a+b$ and give
explicit formula for $(a+b)^d$.

\begin{theorem}\label{te-add-gD11} Let $a,b\in\A^d$ and $aba^\pi=a^\pi b^\pi bab^\pi a^\pi$.
If $a^\pi ba^\pi$ $($or $a^\pi b$ or $ba^\pi$ or $aa^dbaa^d$ or
$aa^db$ or $baa^d$$)$ is generalized Drazin invertible, then
$$a+b\in\A^d \Leftrightarrow c=aa^d(a+b)\in\A^d \Leftrightarrow
(a+b)aa^d\in\A^d \Leftrightarrow aa^d(a+b)aa^d\in\A^d.$$ In this
case,
\begin{eqnarray}(a+b)^d&=&c^d+\sum\limits_{n=0}^{\infty}(b^d)^{n+1}a^{n}a^\pi
-\sum\limits_{n=0}^{\infty}(b^d)^{n+1}a^{n}a^\pi
bc^d\nonumber\\&+&\sum\limits_{n=0}^{\infty}\sum\limits_{k=0}^{\infty}(b^d)^{n+k+2}a^{k}a^\pi
baa^dc^nc^\pi+\sum\limits_{n=0}^{\infty}b^\pi(a+b)^na^\pi
b(c^d)^{n+2}\nonumber\\
&-&\sum_{n=0}^{\infty}\sum_{k=0}^{\infty}(b^d)^{k+1}a^{k+1}(a+b)^{n}a^\pi
b(c^d)^{n+2}.\label{jed-add-gD11}
\end{eqnarray}
\end{theorem}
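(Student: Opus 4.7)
The plan is to reduce the problem to a lower--triangular block representation with respect to the idempotent $p=aa^d$ and then combine Lemma~\ref{tetriangD} with Lemma~\ref{te-add-gD1}. Relative to $p$ write
$$a=\sbmatrix{cc} a_1 & 0 \\ 0 & a_2 \endsbmatrix, \qquad b=\sbmatrix{cc} b_1 & b_2 \\ b_3 & b_4 \endsbmatrix,$$
with $a_1\in(p\A p)^{-1}$ and $a_2\in((1-p)\A(1-p))^{qnil}$.

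The first reduction uses the fact that the right--hand side of $aba^\pi=a^\pi b^\pi bab^\pi a^\pi$ is sandwiched by $a^\pi=1-p$ and therefore lies in $(1-p)\A(1-p)$. Multiplying the identity on the left by $p$ gives $paba^\pi=0$, i.e.\ $a_1b_2=0$; invertibility of $a_1$ in $p\A p$ forces $b_2=0$, so that
$$a+b=\sbmatrix{cc} a_1+b_1 & 0 \\ b_3 & a_2+b_4 \endsbmatrix$$
is lower triangular. Each of the six auxiliary hypotheses translates in this block picture into generalized Drazin invertibility of either $b_1$ in $p\A p$ or $b_4$ in $(1-p)\A(1-p)$; together with Lemma~\ref{tetriangD}(ii) applied to the (lower- or mirrored upper-) triangular form of $b\in\A^d$ this produces both $b_1\in(p\A p)^d$ and $b_4\in((1-p)\A(1-p))^d$.

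Next I would compare the bottom--right blocks of the two sides of the original identity. Using $b_2=0$ and the formula in Lemma~\ref{tetriangD}(i) for the block form of $b^d$, a direct computation shows that the bottom--right block of $a^\pi b^\pi bab^\pi a^\pi$ equals $b_4^\pi b_4 a_2 b_4^\pi$, while that of $aba^\pi$ is $a_2b_4$. Hence $a_2b_4=b_4^\pi b_4 a_2 b_4^\pi$ inside $(1-p)\A(1-p)$, and Lemma~\ref{te-add-gD1} applied in that subalgebra (with $a_2$ playing the role of the quasinilpotent element and $b_4$ the generalized Drazin invertible one) yields
$$a_2+b_4\in((1-p)\A(1-p))^d, \qquad (a_2+b_4)^d=\sum_{n=0}^{\infty}(b_4^d)^{n+1}a_2^n.$$

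At this point the four equivalences are immediate: $aa^d(a+b)$, $(a+b)aa^d$ and $aa^d(a+b)aa^d$ each have the block form of a triangular matrix whose only nontrivial corner is $a_1+b_1$, so generalized Drazin invertibility of any one of them is equivalent to $a_1+b_1\in(p\A p)^d$, and by Lemma~\ref{tetriangD} the same is true of $a+b$ itself, since $a_2+b_4$ has just been shown to be generalized Drazin invertible. For the explicit formula I would substitute into Lemma~\ref{tetriangD}(i) to get
$$(a+b)^d=\sbmatrix{cc} (a_1+b_1)^d & 0 \\ u & (a_2+b_4)^d \endsbmatrix,$$
expand $(a_2+b_4)^d$ and $(a_2+b_4)^\pi$ using the formula above, and reassemble the resulting blocks in terms of $c^d$, $c^\pi$, $a^d$, $a^\pi$, $b^d$ and $b^\pi$. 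The main obstacle is precisely this final bookkeeping: matching the two double series in \eqref{jed-add-gD11} requires carefully identifying how the cross terms between the off--diagonal entry $u$ and the Neumann--type expansions of $(a_2+b_4)^d$ and $(a_2+b_4)^\pi$ reassemble into $aa^d b aa^d c^n c^\pi$ and $(a+b)^n a^\pi b(c^d)^{n+2}$, and verifying the cancellation against the subtracted term $(b^d)^{k+1}a^{k+1}(a+b)^n a^\pi b(c^d)^{n+2}$; everything else is routine manipulation of geometric series.
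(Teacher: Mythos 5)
Your proposal follows the paper's proof essentially step for step: the same block decomposition relative to $p=aa^d$, the deduction $b_2=0$ from $a_1b_2=0$, the use of Lemma~\ref{tetriangD} (with the auxiliary hypothesis and $b\in\A^d$) to get $b_1\in(p\A p)^d$, $b_4\in((1-p)\A(1-p))^d$ and the block forms of $b^d$, $b^\pi$, the comparison of $(2,2)$-blocks to obtain $a_2b_4=b_4^\pi b_4a_2b_4^\pi$, Lemma~\ref{te-add-gD1} in the corner subalgebra to get $(a_2+b_4)^d=\sum_{n\ge0}(b_4^d)^{n+1}a_2^n$, and Lemma~\ref{tetriangD} again for the four equivalences and the block formula for $(a+b)^d$. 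Up to that point everything is correct and is exactly what the paper does.

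The gap is that the derivation of the explicit expression (\ref{jed-add-gD11}) — the actual content of the theorem — is only announced, not carried out, and the one non-routine ingredient it needs is not identified. From $a_2b_4=b_4^\pi b_4a_2b_4^\pi$ one must first extract $a_2b_4^d=0$ (multiply on the right by $(b_4^d)^2$ and use $b_4^\pi b_4^d=0$, together with $b_4^d=b_4(b_4^d)^2$); only then does one get the closed form
$$(a_2+b_4)^\pi=(1-p)-(a_2+b_4)(a_2+b_4)^d=b_4^\pi-\sum_{k=0}^{\infty}(b_4^d)^{k+1}a_2^{k+1},$$
which is what splits the middle term of $u$ in Lemma~\ref{tetriangD}(i), namely $\sum_{n}(a_2+b_4)^\pi(a_2+b_4)^nb_3(c^d)^{n+2}$, into the two global series $\sum_n b^\pi(a+b)^na^\pi b(c^d)^{n+2}$ and $-\sum_{n,k}(b^d)^{k+1}a^{k+1}(a+b)^na^\pi b(c^d)^{n+2}$ appearing in (\ref{jed-add-gD11}). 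Without noting $a_2b_4^d=0$, the "reassembly" you defer cannot be completed as stated, since $(a_2+b_4)^\pi\neq b_4^\pi$ in general. The remaining identifications (the paper's $X_1,\dots,X_5$, matching $(a_2+b_4)^d$, $(a_2+b_4)^db_3c^d$, $\sum_n[(a_2+b_4)^d]^{n+2}b_3c^nc^\pi$ and the two series above with the corresponding blocks) are indeed routine, but they, together with the $a_2b_4^d=0$ step, constitute roughly half of the paper's proof and should be written out rather than left as an acknowledged obstacle.
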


\begin{proof} We have
the following matrix representations of $a$ and $b$ relative to
$p=aa^d$:
\begin{equation}a=\sbmatrix{cc}
a_{1}&0\\0&a_{2}\endsbmatrix, \quad b=\sbmatrix{cc}
b_{1}&b_{2}\\b_{3}&b_{4}\endsbmatrix,\label{jed-add-gD51}\end{equation} where $a_{1}\in(p\A
p)^{-1}$ and $a_{2}\in((1-p)\A(1-p))^{qnil}$.

By the assumption $aba^\pi=a^\pi b^\pi bab^\pi a^\pi$, we get
$a_1b_2=0$. Since $a_1$ is invertible, we deduce that $b_2=0$. So,
$$b=\sbmatrix{cc}
b_1&0\\b_{3}&b_{4}\endsbmatrix.$$ Using Lemma \ref{tetriangD}, if
one of the elements $a^\pi ba^\pi$ or $a^\pi b$ or $ba^\pi$ is
generalized Drazin invertible, we conclude that
$b_{4}\in((1-p)\A(1-p))^{d}$. Similarly, if one of the elements
$aa^dbaa^d$ or $aa^db$ or $baa^d$ is generalized Drazin
invertible, then $b_{1}\in(p\A p)^{d}$. Applying again Lemma
\ref{tetriangD}, because $b\in\A^d$ and one of the above mentioned
elements is generalized Drazin invertible, $b_{1}\in(p\A p)^{d}$,
$b_{4}\in((1-p)\A(1-p))^{d}$,
\begin{equation}b^d=\sbmatrix{cc}
b_{1}^d&0\\s&b_{4}^d\endsbmatrix \quad {\rm and} \quad
b^\pi=\sbmatrix{cc}
b_1^\pi&0\\-(b_{3}b_{1}^d+b_{4}s)&b_{4}^\pi\endsbmatrix,\label{jed-add-gD71}\end{equation} where
$$s=\sum\limits_{n=0}^{\infty}(b_4^d)^{n+2}b_3b_1^nb_1^\pi+
\sum\limits_{n=0}^{\infty}b_4^\pi
b_4^{n}b_3(b_1^d)^{n+2}-b_4^db_3b_1^d.$$

From
$$\sbmatrix{cc}
0&0\\0&a_{2}b_4\endsbmatrix=aba^\pi=a^\pi b^\pi bab^\pi
a^\pi=\sbmatrix{cc} 0&0\\0&b_{4}^\pi
b_{4}a_2b_{4}^\pi\endsbmatrix,$$ we obtain $a_{2}b_4=b_{4}^\pi
b_{4}a_2b_{4}^\pi$. By Lemma \ref{te-add-gD1}, we observe that
$a_2+b_4\in((1-p)\A(1-p))^{d}$ and
$$(a_2+b_4)^d=\sum_{n=0}^{\infty}(b_4^d)^{n+1}a_2^n.$$
Applying Lemma \ref{tetriangD}, $a+b$ is generalized Drazin
invertible if and only if $c(=aa^d(a+b)=aa^d(a+b)aa^d)=a_1+b_1$ is
generalized Drazin invertible if and only if $(a+b)aa^d$ is
generalized Drazin invertible. In this case,
\begin{equation}(a+b)^d=\sbmatrix{cc}
a_1+b_1&0\\b_{3}&a_2+b_{4}\endsbmatrix^d=\sbmatrix{cc}
c^d&0\\u&(a_2+b_{4})^d\endsbmatrix
,\label{jed-add-gD31}\end{equation}
where
$$u=\sum\limits_{n=0}^{\infty}[(a_2+b_4)^d]^{n+2}b_3c^nc^\pi
+\sum_{n=0}^{\infty}(a_2+b_{4})^\pi(a_2+b_{4})^nb_3(c^d)^{n+2}-(a_2+b_{4})^db_3c^d.$$

The equality $a_{2}b_4=b_{4}^\pi b_{4}a_2b_{4}^\pi$ implies
$a_{2}b_4^d=0$ and
\begin{eqnarray*}
(a_2+b_4)^\pi&=&(1-p)-(a_2+b_4)(a_2+b_4)^d=(1-p)-b_4\sum_{k=0}^{\infty}(b_4^d)^{k+1}a_2^k\\
&=&b_4^\pi-\sum_{k=0}^{\infty}(b_4^d)^{k+1}a_2^{k+1}.
\end{eqnarray*}
Hence,
\begin{eqnarray}
\sum_{n=0}^{\infty}(a_2+b_{4})^\pi(a_2+b_{4})^nb_3(c^d)^{n+2}&=&\sum_{n=0}^{\infty}b_4^\pi(a_2+b_{4})^nb_3(c^d)^{n+2}\nonumber\\
&-&\sum_{n=0}^{\infty}\sum_{k=0}^{\infty}(b_4^d)^{k+1}a_2^{k+1}(a_2+b_{4})^nb_3(c^d)^{n+2}.\nonumber\\\label{jed-add-gD41}
\end{eqnarray}
By the equalities
$$X_1=\sum\limits_{n=0}^{\infty}(b^d)^{n+1}a^{n}a^\pi=\sbmatrix{cc}
0&0\\0&\sum\limits_{n=0}^{\infty}(b_4^d)^{n+1}a_2^n\endsbmatrix=\sbmatrix{cc}
0&0\\0&(a_2+b_{4})^d\endsbmatrix,$$

\begin{eqnarray*}X_2&=&\sum\limits_{n=0}^{\infty}(b^d)^{n+1}a^{n}a^\pi
bc^d=\sbmatrix{cc} 0&0\\0&(a_2+b_{4})^d\endsbmatrix\sbmatrix{cc}
b_1c^d&0\\b_3c^d&0\endsbmatrix\\&=&\sbmatrix{cc}
0&0\\(a_2+b_{4})^db_3c^d&0\endsbmatrix,\end{eqnarray*}

\begin{eqnarray*}X_3&=&\sum\limits_{n=0}^{\infty}\sum\limits_{k=0}^{\infty}(b^d)^{n+k+2}a^{k}a^\pi
baa^dc^nc^\pi=\sum\limits_{n=0}^{\infty}\sum\limits_{k=0}^{\infty}(b^d)^{n+k+2}a^{k}a^\pi
(a^\pi baa^dc^nc^\pi)\\&=&\sum\limits_{n=0}^{\infty}\sbmatrix{cc}
0&0\\0&[(a_2+b_{4})^d]^{n+2}\endsbmatrix\sbmatrix{cc}
0&0\\b_3c^nc^\pi&0\endsbmatrix\\&=&\sbmatrix{cc}
0&0\\\sum\limits_{n=0}^{\infty}[(a_2+b_{4})^d]^{n+2}b_3c^nc^\pi&0\endsbmatrix,
\end{eqnarray*}

$$X_4=\sum\limits_{n=0}^{\infty}b^\pi(a+b)^na^\pi
b(c^d)^{n+2}=\sbmatrix{cc}
0&0\\\sum\limits_{n=0}^{\infty}b_4^\pi(a_2+b_{4})^nb_3(c^d)^{n+2}&0\endsbmatrix,$$

\begin{eqnarray*}X_5&=&\sum_{n=0}^{\infty}\sum_{k=0}^{\infty}(b^d)^{k+1}a^{k+1}(a+b)^{n}a^\pi
b(c^d)^{n+2}\\
&=&\sbmatrix{cc}
0&0\\\sum\limits_{n=0}^{\infty}\sum\limits_{k=0}^{\infty}(b_4^d)^{k+1}a_2^{k+1}(a_2+b_{4})^nb_3(c^d)^{n+2}&0\endsbmatrix,
\end{eqnarray*}
(\ref{jed-add-gD41}) and (\ref{jed-add-gD31}), we obtain that the
formula (\ref{jed-add-gD11}) holds.
\end{proof}

If we suppose that $a\in\A^{qnil}$ in Theorem \ref{te-add-gD11},
we get Lemma \ref{te-add-gD1} as a consequence.

By the following examples, we observe that the conditions
$aba^\pi=0$ (which appears in \cite{LQB}) and $aba^\pi=a^\pi b^\pi
bab^\pi a^\pi$ are independent. Precisely, in the first example,
the condition $aba^\pi=0$ holds, but the condition $aba^\pi=a^\pi
b^\pi bab^\pi a^\pi$ is not satisfied.

{\bf Example 2.1.} We consider the following matrices $a$ and $b$
in the algebra of all complex $3\times 3$ matrices $\A$:
$$a=\sbmatrix{ccc}
0&0&0\\0&0&0\\0&1&0\endsbmatrix, \quad
b=\sbmatrix{ccc}0&0&1\\0&0&0\\0&0&0\endsbmatrix.$$ Since
$a^2=b^2=0$, then $a^\pi=b^\pi=1$. So, $aba^\pi=0$ and $$a^\pi
b^\pi bab^\pi a^\pi=ba=\sbmatrix{ccc}
0&1&0\\0&0&0\\0&0&0\endsbmatrix\neq aba^\pi.$$

Also, we present complex matrices $a$ and $b$ such that
$aba^\pi=a^\pi b^\pi bab^\pi a^\pi$, but $aba^\pi\neq 0$ in the
next example.

{\bf Example 2.2.} Let $\A$ be defined as in Example 2.1 and let
$a,b\in\A$ such that
$$a=b=\sbmatrix{ccc}
0&0&0\\1&0&0\\0&1&0\endsbmatrix.$$ Notice that
$$a^2=\sbmatrix{ccc} 0&0&0\\0&0&0\\1&0&0\endsbmatrix,$$ $a^3=0$
and $a^\pi=1$. Thus, we have $0\neq aba^\pi=a^2=a^\pi a^\pi
a^2a^\pi a^\pi=a^\pi b^\pi bab^\pi a^\pi$.

Now, we give one consequence of Theorem \ref{te-add-gD11}. Notice
that the condition $a^dab=0$ is equivalent to $a^\pi b=b$.

\begin{corollary}\label{cor-add-gD21} Let $a,b\in\A^d$. If $aba^\pi=b^\pi bab^\pi a^\pi$ and $a^dab=0$, then  $a+b\in\A^d$ and
\begin{eqnarray*}(a+b)^d&=&a^d+\sum_{n=0}^{\infty}(b^d)^{n+1}a^{n}a^\pi
+\sum_{n=0}^{\infty}b^\pi(a+b)^{n}b(a^d)^{n+2}\\
&-&\sum_{n=0}^{\infty}\sum_{k=0}^{\infty}(b^d)^{k+1}a^{k+1}(a+b)^nb(a^d)^{n+2}
-\sum_{n=1}^{\infty}(b^d)^{n+1}a^{n}ba^d.
\end{eqnarray*}
\end{corollary}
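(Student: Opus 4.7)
The plan is to recognize the corollary as a direct specialization of Theorem~\ref{te-add-gD11} obtained by exploiting the extra assumption $a^dab=0$, which is equivalent to $a^\pi b=b$. This hypothesis delivers two of the ingredients needed for Theorem~\ref{te-add-gD11}: first, $a^\pi b=b\in\A^d$, so the auxiliary generalized Drazin invertibility requirement is automatic; second, iterating $a^\pi b=b$ gives $a^\pi b^n=b^n$ for every $n\geq 1$, hence
$$a^\pi b^\pi b=a^\pi(b-b^2b^d)=b-b^2b^d=b^\pi b.$$
Multiplying the corollary's hypothesis $aba^\pi=b^\pi bab^\pi a^\pi$ on the left by $a^\pi$ and inserting this identity then yields $aba^\pi=a^\pi b^\pi bab^\pi a^\pi$, which is precisely the remaining hypothesis of Theorem~\ref{te-add-gD11}.

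Next I would identify the auxiliary element $c=aa^d(a+b)$. Since $a^\pi b=b$ forces $aa^db=0$, one has $c=aa^da=a^2a^d$. Relative to the idempotent $p=aa^d$, $c$ coincides with $a_1$ on $p\A p$ (where $a_1$ is invertible) and with $0$ on the complementary block, so $c\in\A^d$ with $c^d=a^d$ and $c^\pi=1-cc^d=1-aa^d=a^\pi$. Theorem~\ref{te-add-gD11} therefore applies and produces both the existence of $(a+b)^d$ and formula~(\ref{jed-add-gD11}).

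The remaining task is to reduce~(\ref{jed-add-gD11}) to the stated form. Substitute $c^d=a^d$ throughout, and use $a^\pi b=b$ (together with the fact that $a^\pi$ commutes with every power of $a$) to collapse each occurrence of $a^na^\pi b$ to $a^nb$. The key simplification is that the double sum $\sum_{n,k}(b^d)^{n+k+2}a^ka^\pi baa^d c^nc^\pi$ vanishes identically: for $n\geq 1$ the factor $c^nc^\pi$ is zero by idempotency of $cc^d$, and for $n=0$ the factor $aa^d\cdot c^\pi=aa^d\cdot a^\pi$ is zero. The remaining four summations then reassemble into those appearing in the corollary's formula. I expect the main obstacle to be the careful bookkeeping of indices when the simplified sum $-\sum_{n}(b^d)^{n+1}a^nba^d$ is split and combined with the leading term $c^d=a^d$ to produce the stated reduced form.
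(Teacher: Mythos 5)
Your plan is the same as the paper's: the published proof consists precisely of the observation that $a^dab=0$ (equivalently $a^\pi b=b$) upgrades $aba^\pi=b^\pi bab^\pi a^\pi$ to $aba^\pi=a^\pi b^\pi bab^\pi a^\pi$, followed by an application of Theorem \ref{te-add-gD11} with $c=a^2a^d$. The details you supply are correct and go beyond what the paper writes: the identity $a^\pi b^\pi b=b^\pi b$, the fact that the auxiliary invertibility hypothesis holds (indeed $aa^db=0$), the identifications $c^d=a^d$, $c^\pi=a^\pi$, and the vanishing of the double sum $X_3$ (for $n\ge 1$ because $c^nc^\pi=a^{n+1}a^da^\pi=0$, for $n=0$ because $aa^da^\pi=0$).

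The one step you defer to ``bookkeeping'' is, however, exactly where the argument cannot be finished as you describe. Substituting $c^d=a^d$, $c^\pi=a^\pi$ and $a^\pi b=b$ into (\ref{jed-add-gD11}) produces the term $-\sum_{n=0}^{\infty}(b^d)^{n+1}a^{n}ba^d$, whose $n=0$ part is $-b^dba^d$; the corollary's last sum starts at $n=1$, and there is no way to ``split and combine'' $-b^dba^d$ with the leading $a^d$: under the stated hypotheses it neither vanishes nor is absorbed by any other summand. Concretely, take $a=\sbmatrix{cc}1&0\\0&0\endsbmatrix$ and $b=\sbmatrix{cc}0&0\\1&1\endsbmatrix$; then $a^dab=0$ and $aba^\pi=0=b^\pi bab^\pi a^\pi$, the specialization of (\ref{jed-add-gD11}) correctly gives $(a+b)^{-1}=\sbmatrix{cc}1&0\\-1&1\endsbmatrix$, whereas the formula with the last sum starting at $n=1$ (i.e.\ omitting $-b^dba^d=-ba$) returns the identity matrix. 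So what your (and the paper's) argument actually proves is the displayed formula with the final sum taken from $n=0$; the lower limit $n=1$ in the statement appears to be a misprint. You should carry out the substitution into (\ref{jed-add-gD11}) explicitly and record the resulting formula, rather than trusting that the indices rearrange to the printed one.
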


\begin{proof} The assumptions $aba^\pi=b^\pi bab^\pi a^\pi$ and $a^dab=0$ imply $aba^\pi=a^\pi b^\pi bab^\pi
a^\pi$. Applying Theorem \ref{te-add-gD11}, for $c=a^2a^d$, we
obtain this result.
\end{proof}

If we suppose that $aa^db$ commutes with $a$ in Theorem
\ref{te-add-gD11}, we get the explicit expression for $c^d$ in
terms of $a$, $b$, $a^d$ and $b^d$.

\begin{theorem}\label{te-add-gD31} Let $a,b\in\A^d$, $aba^\pi=a^\pi b^\pi bab^\pi a^\pi$ and $a^2a^db=aa^dba$.
If $a^\pi ba^\pi$ $($or $a^\pi b$ or $ba^\pi$ or $aa^dbaa^d$ or
$aa^db$ or $baa^d$$)$ is generalized Drazin invertible, then
\begin{eqnarray*}a+b\in\A^d &\Leftrightarrow& 1+a^db\in\A^d \Leftrightarrow aa^d(1+a^db)\in\A^d \Leftrightarrow
(1+a^db)aa^d\in\A^d\\& \Leftrightarrow &aa^d(1+a^db)aa^d\in\A^d.
\end{eqnarray*} In this case, $(a+b)^d$ is represented as in {\rm
(\ref{jed-add-gD11})}, where $c=aa^d(a+b)$ and
\begin{eqnarray*}c^d&=&a^d(1+a^db)^dbb^d+aa^db^\pi\sum\limits_{n=0}^{\infty}(-b)^n(a^d)^{n+1}.
\end{eqnarray*}
\end{theorem}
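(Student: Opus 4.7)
The plan is to reduce everything to the setting of Theorem~\ref{te-add-gD11} and then handle the computation of $c^d$ by applying Lemma~\ref{le-add-DW} inside the corner algebra $p\A p$, where $p=aa^d$. Recall from the proof of Theorem~\ref{te-add-gD11} that, relative to $p=aa^d$, we have
\[
a=\sbmatrix{cc} a_{1}&0\\0&a_{2}\endsbmatrix,\qquad
b=\sbmatrix{cc} b_{1}&0\\b_{3}&b_{4}\endsbmatrix,
\]
with $a_1\in(p\A p)^{-1}$, $a_2\in((1-p)\A(1-p))^{qnil}$, and $c=aa^d(a+b)=a_1+b_1$ sitting in the $(1,1)$ corner. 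So everything reduces to identifying $(a_1+b_1)^d\in(p\A p)^d$, lifting this to $c^d\in\A^d$, and substituting into the formula from Theorem~\ref{te-add-gD11}.

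Next, I would interpret the commutation hypothesis $a^2a^db=aa^dba$ in this block decomposition. Multiplying on the left by $p$ and on the right by $p$ kills the $(1,2)$, $(2,1)$, and $(2,2)$ entries, so the hypothesis is precisely the identity $a_1b_1=b_1a_1$ in $p\A p$. Since $a_1$ is invertible in $p\A p$, Lemma~\ref{le-add-DW} applies to the commuting pair $a_1,b_1$, giving
\[
a_1+b_1\in(p\A p)^d\ \Longleftrightarrow\ 1_p+a_1^{-1}b_1\in(p\A p)^d.
\]
The equivalent reformulations in the statement (involving $1+a^db$, $aa^d(1+a^db)$, etc.) follow by matrix-block-matching: relative to $p$, the element $1+a^db$ is block-diagonal with upper block $1_p+a_1^{-1}b_1$ and lower block $1-p$, and the $(1-p)$ blocks obtained from multiplying by $aa^d$ on either side are all group-invertible trivially, so all six forms are simultaneously generalized Drazin invertible.

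For the closed form of $c^d$, I would apply the formula in Lemma~\ref{le-add-DW} directly to $a_1+b_1$: the sum $\sum_{n=0}^{\infty}(b_1^d)^{n+1}(-a_1)^n a_1^\pi$ vanishes because $a_1^\pi=0$, leaving
\[
(a_1+b_1)^d=a_1^{-1}(1_p+a_1^{-1}b_1)^d b_1b_1^d+b_1^\pi\sum_{n=0}^{\infty}(-b_1)^n(a_1^{-1})^{n+1}.
\]
Since $a^d$ is block-diagonal with upper block $a_1^{-1}$, and since $aa^db^\pi$ equals $\operatorname{diag}(b_1^\pi,0)$, a direct block computation (using $b_2=0$) shows that this exactly lifts to
\[
c^d=a^d(1+a^db)^d bb^d+aa^d b^\pi\sum_{n=0}^{\infty}(-b)^n(a^d)^{n+1}
\]
in $\A$ (both summands are supported in the $(1,1)$ block). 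Plugging this $c^d$ into (\ref{jed-add-gD11}) from Theorem~\ref{te-add-gD11} yields the claimed formula.

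The main obstacle, and really the only delicate point, is the bookkeeping for the block identity $c^d=a^d(1+a^db)^d bb^d+aa^db^\pi\sum(-b)^n(a^d)^{n+1}$: one must check that passing from $(a_1^{-1})^{n+1}$ and $b_1,b_1^d,b_1^\pi$ to the full-algebra expressions $(a^d)^{n+1}$, $b$, $bb^d$, $b^\pi$ introduces no spurious contributions from the $(2,1)$ or $(2,2)$ blocks. This is where $b_2=0$ (already derived in Theorem~\ref{te-add-gD11}) and the sandwich by $p=aa^d$ both play their role, so the verification is routine but must be done carefully.
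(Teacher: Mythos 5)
Your proposal is correct and follows essentially the same route as the paper: reduce to the block setting of Theorem \ref{te-add-gD11} with $p=aa^d$ and $b_2=0$, read the hypothesis $a^2a^db=aa^dba$ as $a_1b_1=b_1a_1$, and apply Lemma \ref{le-add-DW} to the commuting pair $a_1,b_1$ (with $a_1^\pi=0$) to get both the equivalences via $p+a_1^{-1}b_1=aa^d(1+a^db)$ and the closed form for $c^d$. The block-lifting verification you flag as the delicate point is exactly the computation the paper performs implicitly in (\ref{jed-add-gD21}).
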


\begin{proof} Using the same notation as in the proof of Theorem
\ref{te-add-gD11}, observe that $a+b$ is generalized Drazin
invertible if and only if $c=aa^d(a+b)=a_1+b_1$ is generalized
Drazin invertible. The equality $a^2a^db=aa^dba$ gives
$a_1b_1=b_1a_1$. By Lemma \ref{le-add-DW}, $c=a_1+b_1$ is
generalized Drazin invertible if and only if
$p+a_1^{-1}b_1=aa^d(1+a^db)(=(1+a^db)aa^d=aa^d(1+a^db)aa^d)$ is
generalized Drazin invertible which is equivalent to $1+a^db$ is
generalized Drazin invertible. Applying Lemma \ref{le-add-DW}, we
have that
\begin{eqnarray}
c^d&=&(a_1+b_1)^d=a_1^{-1}(1+a_1^{-1}b_1)^db_1b_1^d+b_1^\pi\sum_{n=0}^{\infty}(-b_1)^{n}a_1^{-(n+1)}\nonumber\\
&=&a^d(1+a^db)^dbb^d+aa^db^\pi\sum\limits_{n=0}^{\infty}(-b)^n(a^d)^{n+1}.\label{jed-add-gD21}
\end{eqnarray}
\end{proof}

In the case that $aba^\pi=a^\pi baa^\pi$, we obtain the following
result related to the generalized Drazin inverse of the sum $a+b$.

\begin{theorem}\label{te-add-gD41} Let $a,b\in\A^d$ and $aba^\pi=a^\pi baa^\pi$.
If $a^\pi ba^\pi$ (or $a^\pi b$ or $ba^\pi$ or $aa^dbaa^d$ or
$aa^db$ or $baa^d$) is generalized Drazin invertible, then
$$a+b\in\A^d \Leftrightarrow c=aa^d(a+b)\in\A^d \Leftrightarrow
(a+b)aa^d\in\A^d \Leftrightarrow aa^d(a+b)aa^d\in\A^d.$$ In this
case,
\begin{eqnarray}(a+b)^d&=&c^d+\sum\limits_{n=0}^{\infty}(b^d)^{n+1}(-a)^{n}a^\pi
-\sum\limits_{n=0}^{\infty}(b^d)^{n+1}(-a)^{n}a^\pi
bc^d\nonumber\\&+&\sum\limits_{n=0}^{\infty}\left(\sum\limits_{k=0}^{\infty}(b^d)^{k+1}(-a)^{k}a^\pi\right)^{n+2}
baa^dc^nc^\pi+\sum\limits_{n=0}^{\infty}(a+b)^na^\pi
b(c^d)^{n+2}\nonumber\\
&-&\sum_{n=0}^{\infty}(b^d)^{n+1}(-a)^{n}a^\pi(a+b)^{n+1}a^\pi
b(c^d)^{n+2}.\label{jed-add-gD61}
\end{eqnarray}
\end{theorem}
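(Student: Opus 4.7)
The plan is to mirror the proof of Theorem~\ref{te-add-gD11}: decompose $a$ and $b$ along the idempotent $p = aa^d$, exploit the new hypothesis $aba^\pi = a^\pi baa^\pi$ to triangularize $b$ and to extract a commutation identity on the $(2,2)$-corner, then handle that corner with Lemma~\ref{le-add-DW} in place of Lemma~\ref{te-add-gD1}, and finally reassemble via Lemma~\ref{tetriangD}. Writing $a$ and $b$ as in (\ref{jed-add-gD51}), the $(1,2)$-block of the identity $aba^\pi = a^\pi baa^\pi$ forces $a_1 b_2 = 0$, hence $b_2 = 0$ by invertibility of $a_1$, while the $(2,2)$-block yields the commutation relation $a_2 b_4 = b_4 a_2$. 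This is the structural difference from Theorem~\ref{te-add-gD11}, where the analogous calculation produced instead $a_2 b_4 = b_4^\pi b_4 a_2 b_4^\pi$.

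Once $b$ is lower triangular, Lemma~\ref{tetriangD} combined with the generalized-Drazin-invertibility hypothesis on one of the six listed elements yields $b_1 \in (p\A p)^d$, $b_4 \in ((1-p)\A(1-p))^d$, and block expressions for $b^d$ and $b^\pi$ of the form displayed in (\ref{jed-add-gD71}). On the $(2,2)$-corner, $a_2^d = 0$ and $a_2$ commutes with $b_4$, so Lemma~\ref{le-add-DW} applies; since $1-p + a_2^d b_4 = 1-p$ is the identity of the corner algebra, the first two summands in that lemma's formula vanish and we obtain
$$(a_2+b_4)^d = \sum_{n=0}^{\infty}(b_4^d)^{n+1}(-a_2)^n,$$
from which $(a_2+b_4)^\pi$ follows in closed form. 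A second application of Lemma~\ref{tetriangD}, now to the lower-triangular block form of $a+b$ with $(2,2)$-corner $a_2+b_4$, delivers the advertised chain of equivalences (all four listed conditions reduce to $a_1 + b_1 \in (p\A p)^d$) together with the matrix formula
$$(a+b)^d = \sbmatrix{cc} c^d & 0 \\ u & (a_2+b_4)^d \endsbmatrix,$$
where $u$ is the standard off-diagonal expression built from $(a_2+b_4)^d$, $(a_2+b_4)^\pi$, $b_3$, and $c$.

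The remainder is the same kind of bookkeeping carried out in Theorem~\ref{te-add-gD11}: expand each of the five global summands on the right-hand side of (\ref{jed-add-gD61}) in block form and verify that their sum, together with $c^d$, matches the displayed matrix. The workhorse identity is $a_2 b_4^d = b_4^d a_2$, a consequence of $a_2 b_4 = b_4 a_2$ and the fact that the generalized Drazin inverse lies in the bicommutant of its argument. The alternating signs $(-a)^n$ in (\ref{jed-add-gD61}), and the disappearance of the $b^\pi$ factor relative to (\ref{jed-add-gD11}), trace directly to the use of Lemma~\ref{le-add-DW} rather than Lemma~\ref{te-add-gD1} on the $(2,2)$-corner. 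The main obstacle is purely computational: checking that $\left(\sum_{k=0}^\infty (b^d)^{k+1}(-a)^k a^\pi\right)^{n+2}$ reassembles into the $(n+2)$-th power of $(a_2+b_4)^d$ at the block-matrix level, and that the analogue of (\ref{jed-add-gD41}) correctly decomposes the $(a_2+b_4)^\pi (a_2+b_4)^n b_3 (c^d)^{n+2}$ term. No new conceptual difficulty arises beyond what was already handled in Theorem~\ref{te-add-gD11}.
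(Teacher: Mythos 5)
Your proposal is correct and follows essentially the same route as the paper: the block decomposition relative to $p=aa^d$ giving $b_2=0$ and $a_2b_4=b_4a_2$, the use of Lemma \ref{le-add-DW} (with $a_2^d=0$) to get $(a_2+b_4)^d=\sum_{n=0}^{\infty}(b_4^d)^{n+1}(-a_2)^n$, and the reassembly via Lemma \ref{tetriangD} followed by the same block-by-block verification as in Theorem \ref{te-add-gD11}, which is exactly how the paper concludes. The only nitpick is that the first two summands of Lemma \ref{le-add-DW} vanish simply because $a_2^d=0$, not because $1-p+a_2^db_4$ is the identity (that fact is what makes the lemma applicable), but this does not affect the argument.
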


\begin{proof} Suppose that $a$ and $b$ are represented as in (\ref{jed-add-gD51}).
The hypothesis $aba^\pi=a^\pi baa^\pi$ yields $b_2=0$ and $a_2b_4=b_4a_2$.
Hence, $b^d$ and $b^\pi$ are given by (\ref{jed-add-gD71}).
Since $a_{2}\in((1-p)\A(1-p))^{qnil}$, $a_2^d=0$. Using Lemma \ref{le-add-DW},
we conclude that $a_2+b_4\in((1-p)\A(1-p))^{d}$ and
$$(a_2+b_4)^d=\sum_{n=0}^{\infty}(b_4^d)^{n+1}(-a_2)^n.$$
The rest of this proof follows similarly as in the proof of Theorem \ref{te-add-gD11}.
\end{proof}

If we add some extra conditions in Theorem \ref{te-add-gD41}, we
obtain the next results.

\begin{corollary} Let $a\in\A^{qnil}$ and $b\in\A^d$. If $ab=ba$, then
$a+b\in\A^d$ and
$$(a+b)^d=\sum\limits_{n=0}^{\infty}(b^d)^{n+1}(-a)^{n}.$$
\end{corollary}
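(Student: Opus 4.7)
The plan is to deduce this corollary as a direct specialization of Theorem \ref{te-add-gD41}. Since $a\in\A^{qnil}$, we have $a^d=0$, so the spectral idempotent is $a^\pi=1$ and $aa^d=0$. Using the commutation $ab=ba$, the main hypothesis of the theorem holds trivially, because
\[
aba^\pi = ab = ba = 1\cdot b a\cdot 1 = a^\pi b a a^\pi.
\]
For the auxiliary invertibility assumption, simply observe that $a^\pi b a^\pi = b$, which belongs to $\A^d$ by hypothesis, so the ``($a^\pi b a^\pi$\ is generalized Drazin invertible)'' branch is satisfied.

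Next I would verify the equivalence side of Theorem \ref{te-add-gD41}. With $aa^d=0$ we get $c=aa^d(a+b)=0$, which is trivially generalized Drazin invertible with $c^d=0$ and $c^\pi=1$. Hence by the theorem $a+b\in\A^d$, and the expression (\ref{jed-add-gD61}) applies.

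The remaining task is just to collapse (\ref{jed-add-gD61}) under the substitutions $a^\pi=1$, $aa^d=0$, $c^d=0$, $c^\pi=1$. Every summand except the first two contains a factor of either $c^d$ or $aa^d$, and so vanishes: the third term has the factor $baa^d$; the fourth and sixth terms have powers $(c^d)^{n+2}$; and the second term carries a factor $c^d$. What survives is exactly
\[
(a+b)^d \;=\; \sum_{n=0}^{\infty}(b^d)^{n+1}(-a)^n a^\pi \;=\; \sum_{n=0}^{\infty}(b^d)^{n+1}(-a)^n,
\]
which is the claimed formula.

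There is essentially no obstacle here beyond careful bookkeeping: the only point to watch is that the formula (\ref{jed-add-gD61}) is derived under the hypothesis $a\in\A^d$ rather than $a\in\A^{qnil}$, but since every element of $\A^{qnil}$ is generalized Drazin invertible (with $a^d=0$), Theorem \ref{te-add-gD41} applies without modification and the simplification above completes the proof.
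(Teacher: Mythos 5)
Your proposal is correct and matches the paper's intent: the corollary is stated there precisely as a specialization of Theorem \ref{te-add-gD41}, using $a^d=0$, $a^\pi=1$, hence $c=aa^d(a+b)=0$, so that all terms of (\ref{jed-add-gD61}) involving $c^d$ or $aa^d$ vanish and only $\sum_{n=0}^{\infty}(b^d)^{n+1}(-a)^n$ survives. Apart from a slight mislabeling of which summands carry which vanishing factor (harmless, since every term other than $\sum_{n=0}^{\infty}(b^d)^{n+1}(-a)^n a^\pi$ does contain $c^d$ or $aa^d$), your argument is exactly the paper's.
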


\begin{corollary} If $a,b\in\A^d$ and $ba^\pi=0$, then
$$a+b\in\A^d \Leftrightarrow c=aa^d(a+b)\in\A^d \Leftrightarrow
a^2a^d+b\in\A^d.$$ In this case,
\begin{eqnarray*}(a+b)^d&=&c^d
+\sum\limits_{n=0}^{\infty}a^na^\pi b(c^d)^{n+2}\nonumber.
\end{eqnarray*}
\end{corollary}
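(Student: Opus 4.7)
The plan is to reduce the corollary to Theorem~\ref{te-add-gD41} and then collapse the six-term expansion~(\ref{jed-add-gD61}) using the strong hypothesis $ba^\pi=0$. Since $a$ and $a^\pi$ commute, $ba^\pi=0$ forces $baa^\pi=(ba^\pi)a=0$, so both $aba^\pi$ and $a^\pi baa^\pi$ vanish and the structural hypothesis $aba^\pi=a^\pi baa^\pi$ holds trivially. Moreover, $ba^\pi=0$ is equivalent to $b=baa^d$, so the auxiliary generalized Drazin invertibility hypothesis is automatic: $baa^d=b\in\A^d$. Theorem~\ref{te-add-gD41} then delivers the equivalences $a+b\in\A^d\Leftrightarrow c=aa^d(a+b)\in\A^d$ together with the formula~(\ref{jed-add-gD61}).

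For the additional equivalence $a^2a^d+b\in\A^d$ I would pass to the $2\times 2$ matrix representation relative to $p=aa^d$. The identity $ba^\pi=0$ kills the whole second block column of $b$, so $b=\sbmatrix{cc}b_1&0\\b_3&0\endsbmatrix$ and therefore $a^2a^d+b=\sbmatrix{cc}a_1+b_1&0\\b_3&0\endsbmatrix$. Lemma~\ref{tetriangD} makes this generalized Drazin invertible precisely when $a_1+b_1\in(p\A p)^d$, which is the same as $c\in\A^d$.

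The heart of the proof is collapsing (\ref{jed-add-gD61}) to the claimed one-sum formula. The key identity is $b^d a^\pi=0$, which follows from $b^d=(b^d)^2 b$ and $ba^\pi=0$. Using the fact that $a^\pi$ commutes with $a$, I get $(b^d)^{k+1}(-a)^k a^\pi=(-1)^k(b^d)^k(b^d a^\pi)a^k=0$ for every $k\ge 0$. Applied inside~(\ref{jed-add-gD61}) this annihilates four of the six terms: the two sums indexed by $n$ in the first line, the double sum on the second line (whose inner $k$-sum is already zero), and the last double sum on the third line. What remains is $c^d+\sum_{n\ge 0}(a+b)^n a^\pi b(c^d)^{n+2}$. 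Finally, $(a+b)a^\pi=aa^\pi+ba^\pi=aa^\pi$ gives $(a+b)^n a^\pi=a^n a^\pi$ by an easy induction on $n$, and substituting this produces exactly the stated expression.

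The main obstacle is the bookkeeping of confirming that every one of the four indicated summands in~(\ref{jed-add-gD61}) really is killed by the identity $b^d a^\pi=0$; no spectral or convergence subtleties arise, since all the series in play already converge by virtue of Theorem~\ref{te-add-gD41}.
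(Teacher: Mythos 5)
Your proposal is correct and is essentially the paper's (implicit) route: the corollary is stated as a specialization of Theorem \ref{te-add-gD41}, and your checks that $ba^\pi=0$ forces $aba^\pi=0=a^\pi baa^\pi$ and $baa^d=b\in\A^d$, together with the collapse of (\ref{jed-add-gD61}) via $b^da^\pi=(b^d)^2ba^\pi=0$ and $(a+b)^na^\pi=a^na^\pi$, is exactly what is needed. One remark on the third equivalence: it is immediate, since $ba^\pi=0$ gives $a^2a^d+b=a^2a^d+baa^d=(a+b)aa^d$, which is one of the four equivalent conditions already listed in Theorem \ref{te-add-gD41}; your detour through the matrix form and Lemma \ref{tetriangD} proves the "if" direction but the "precisely when" is not fully covered by part (ii) as quoted (there the $(1,1)$ block's invertibility is a hypothesis, not a conclusion), so it is cleaner to use the identity above rather than the triangular lemma.
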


\begin{corollary} Let $a,b\in\A^d$. If $aba^\pi=baa^\pi$ and $a^dab=0$, then
$a+b\in\A^d$ and
\begin{eqnarray*}(a+b)^d&=&a^d+\sum\limits_{n=0}^{\infty}(b^d)^{n+1}(-a)^{n}a^\pi
-\sum\limits_{n=0}^{\infty}(b^d)^{n+1}(-a)^{n}
ba^d\nonumber\\&+&\sum\limits_{n=0}^{\infty}(a+b)^n
b(a^d)^{n+2}-\sum_{n=0}^{\infty}(b^d)^{n+1}(-a)^{n}a^\pi(a+b)^{n+1}
b(a^d)^{n+2}.
\end{eqnarray*}
\end{corollary}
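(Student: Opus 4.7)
The plan is to derive this corollary as a direct specialization of Theorem~\ref{te-add-gD41}. First, I would verify its hypotheses. The equivalence $a^dab=0 \iff a^\pi b = b$ (noted just before Corollary~\ref{cor-add-gD21}) means the assumed identity $aba^\pi = baa^\pi$ automatically rewrites as $aba^\pi = a^\pi baa^\pi$, which is exactly the required condition. The auxiliary generalized Drazin invertibility hypothesis is satisfied trivially: $a^\pi b = b$ gives $aa^d b = 0 \in \A^d$, so the option ``$aa^d b$ is generalized Drazin invertible'' in Theorem~\ref{te-add-gD41} is available.

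Next, I would compute $c = aa^d(a+b)$. Since $aa^d b = 0$, this reduces to $c = a^2 a^d$. Using the commutativity $aa^d = a^d a$, a direct verification yields $c^d = a^d$ and $c^\pi = a^\pi$. Moreover, an easy induction gives $c^n = a^{n+1} a^d$ for $n \geq 1$, so that $c^n c^\pi = a^{n+1} a^d \cdot a^\pi = 0$ for every $n \geq 1$ (using $a a^d \cdot a^\pi = 0$). In particular $c \in \A^d$, and Theorem~\ref{te-add-gD41} then yields $a+b \in \A^d$.

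For the explicit formula, I would substitute $c^d = a^d$ and $c^\pi = a^\pi$ into~(\ref{jed-add-gD61}) and simplify each remaining occurrence of $a^\pi b$ to $b$. This collapses $a^\pi b c^d$ to $b a^d$ and $a^\pi b(c^d)^{n+2}$ to $b(a^d)^{n+2}$ in the second, fourth, and fifth summands. The central observation is that the third (double) sum
$$\sum_{n=0}^{\infty}\Bigl(\sum_{k=0}^{\infty}(b^d)^{k+1}(-a)^{k}a^\pi\Bigr)^{n+2} baa^d c^n c^\pi$$
vanishes termwise: the $n=0$ summand carries the factor $baa^d \cdot a^\pi = 0$, while every $n \geq 1$ summand carries $c^n c^\pi = 0$. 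Collecting the four surviving terms then gives precisely the stated formula.

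The main obstacle is nothing deeper than careful bookkeeping: ensuring that each occurrence of $a^\pi b$ is correctly replaced by $b$ and that no index is mishandled when matching the simplified right-hand side against the corollary's expression. Once the third double sum is identified as zero, the rest of the derivation is a direct substitution.
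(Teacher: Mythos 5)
Your proposal is correct and follows exactly the route the paper intends: the corollary is stated as a specialization of Theorem~\ref{te-add-gD41} (with $a^\pi b=b$ turning $aba^\pi=baa^\pi$ into $aba^\pi=a^\pi baa^\pi$, and $c=a^2a^d$, $c^d=a^d$, $c^\pi=a^\pi$), mirroring the paper's proof of Corollary~\ref{cor-add-gD21}. Your verification that the double sum vanishes and that each $a^\pi b$ collapses to $b$ fills in the simplifications the paper leaves implicit.
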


\begin{corollary} Let $a,b\in\A^d$ and $aba^\pi=a^\pi baa^\pi$ and $a^2a^db=aa^dba$.
If $a^\pi ba^\pi$ $($or $a^\pi b$ or $ba^\pi$ or $aa^dbaa^d$ or
$aa^db$ or $baa^d$$)$ is generalized Drazin invertible, then
\begin{eqnarray*}a+b\in\A^d &\Leftrightarrow& 1+a^db\in\A^d \Leftrightarrow aa^d(1+a^db)\in\A^d \Leftrightarrow
(1+a^db)aa^d\in\A^d\\& \Leftrightarrow&
aa^d(1+a^db)aa^d\in\A^d.\end{eqnarray*} In this case, $(a+b)^d$ is
represented as in {\rm (\ref{jed-add-gD61})}, where $c=aa^d(a+b)$
and $c^d$ is given by {\rm (\ref{jed-add-gD21})}.
\end{corollary}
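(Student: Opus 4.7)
The plan is to combine Theorem \ref{te-add-gD41} with Lemma \ref{le-add-DW} in exactly the same way that Theorem \ref{te-add-gD31} is derived from Theorem \ref{te-add-gD11}. I would begin with the matrix decomposition of $a$ and $b$ relative to $p=aa^d$ used in the proof of Theorem \ref{te-add-gD41}: $a=\sbmatrix{cc}a_1&0\\0&a_2\endsbmatrix$ with $a_1\in(p\A p)^{-1}$ and $a_2\in((1-p)\A(1-p))^{qnil}$, and, after the hypothesis $aba^\pi=a^\pi baa^\pi$ forces $b_2=0$, $b=\sbmatrix{cc}b_1&0\\b_3&b_4\endsbmatrix$ with $a_2b_4=b_4a_2$. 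Theorem \ref{te-add-gD41} already supplies (under the hypothesis that one of $a^\pi ba^\pi,\dots,baa^d$ is generalized Drazin invertible) the equivalence $a+b\in\A^d \Leftrightarrow c=aa^d(a+b)=a_1+b_1\in(p\A p)^d$ together with formula (\ref{jed-add-gD61}) for $(a+b)^d$ in terms of $c^d$. So what remains is to compute $c^d$ and to match the additional chain of equivalences stated in the corollary.

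Next I would use the extra hypothesis $a^2a^db=aa^dba$. Reading off the $(1,1)$-block with respect to $p$, this equation is exactly $a_1b_1=b_1a_1$. Since $a_1$ is invertible in $p\A p$, $b_1\in(p\A p)^d$, and $a_1,b_1$ commute, I would apply Lemma \ref{le-add-DW} inside the unital Banach subalgebra $p\A p$ to get
\[
a_1+b_1\in(p\A p)^d \Longleftrightarrow p+a_1^{-1}b_1\in(p\A p)^d,
\]
together with the closed form
\[
(a_1+b_1)^d=a_1^{-1}(p+a_1^{-1}b_1)^d b_1b_1^d+b_1^\pi\sum_{n=0}^\infty (-b_1)^n a_1^{-(n+1)}.
\]
Translating back to $\A$ via $a^d=a_1^{-1}$ on $p\A p$ and zero elsewhere produces precisely formula (\ref{jed-add-gD21}) for $c^d$.

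Finally, for the four-way equivalence involving $1+a^db$, I would observe that $a^db$ lies in $p\A p$, so
\[
aa^d(1+a^db)=p+a_1^{-1}b_1=(1+a^db)aa^d=aa^d(1+a^db)aa^d,
\]
while $1+a^db$ differs from $aa^d(1+a^db)$ only by the idempotent $1-p$, which is always generalized Drazin invertible with $(1-p)^d=1-p$. The block-diagonal decomposition $1+a^db=(p+a_1^{-1}b_1)\oplus(1-p)$ then shows that $1+a^db\in\A^d$ is equivalent to $p+a_1^{-1}b_1\in(p\A p)^d$, closing the chain with $c\in\A^d$ and, via Theorem \ref{te-add-gD41}, with $a+b\in\A^d$.

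There is no real obstacle; the proof is a bookkeeping synthesis of Theorem \ref{te-add-gD41} (which reduces $(a+b)^d$ to $c^d$) and Lemma \ref{le-add-DW} (which evaluates $c^d$ under $a_1b_1=b_1a_1$). The only point requiring mild care is recognizing that the seemingly strong assumption $a^2a^db=aa^dba$ only asserts commutativity on the $(1,1)$-block rather than global commutativity of $a$ and $b$, and that all four listed forms of the ``$1+a^db$'' condition reduce to the single scalar condition on $p+a_1^{-1}b_1$ because $a^db\in p\A p$.
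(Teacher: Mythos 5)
Your proposal is correct and follows exactly the route the paper intends: the corollary is obtained by combining Theorem \ref{te-add-gD41} (which reduces everything to $c=a_1+b_1$ and gives (\ref{jed-add-gD61})) with the argument of Theorem \ref{te-add-gD31}, i.e.\ reading $a^2a^db=aa^dba$ as $a_1b_1=b_1a_1$ and applying Lemma \ref{le-add-DW} in $p\A p$ to get (\ref{jed-add-gD21}) and the equivalences with $1+a^db$. Your extra bookkeeping (that $a^db\in p\A p$, so $aa^d(1+a^db)=(1+a^db)aa^d=aa^d(1+a^db)aa^d=p+a_1^{-1}b_1$, and that $1+a^db$ splits off the unit $1-p$) matches the parenthetical identifications in the paper's proof of Theorem \ref{te-add-gD31}.
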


If we define the reverse multiplication in a Banach algebra $\A$
by $a\circ b=ba$, we obtain a Banach algebra $(\A,\circ)$.
Applying the previous theorems to new algebra $(\A,\circ)$, we can
obtain dual results. For example, see a consequences of Theorem
\ref{te-add-gD11} and Theorem \ref{te-add-gD41} in a new algebra
$(\A,\circ)$.

\begin{corollary} Let $a,b\in\A^d$ and $a^\pi ba=a^\pi b^\pi abb^\pi a^\pi$.
If $a^\pi ba^\pi$ $($or $a^\pi b$ or $ba^\pi$ or $aa^dbaa^d$ or
$aa^db$ or $baa^d$$)$ is generalized Drazin invertible, then
$$a+b\in\A^d \Leftrightarrow aa^d(a+b)\in\A^d \Leftrightarrow
e=(a+b)aa^d\in\A^d \Leftrightarrow aa^d(a+b)aa^d\in\A^d.$$ In this
case,
\begin{eqnarray*}(a+b)^d&=&e^d+\sum\limits_{n=0}^{\infty}a^\pi a^{n}(b^d)^{n+1}
-\sum\limits_{n=0}^{\infty}e^dba^\pi
a^{n}(b^d)^{n+1}\nonumber\\&+&\sum\limits_{n=0}^{\infty}\sum\limits_{k=0}^{\infty}
e^\pi e^naa^dba^{k}a^\pi (b^d)^{n+k+2}+\sum\limits_{n=0}^{\infty}
(e^d)^{n+2}ba^\pi (a+b)^nb^\pi\nonumber\\
&-&\sum_{n=0}^{\infty}\sum_{k=0}^{\infty} (e^d)^{n+2}ba^\pi
(a+b)^{n}a^{k+1}(b^d)^{k+1}.
\end{eqnarray*}
\end{corollary}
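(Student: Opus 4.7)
The plan is to apply Theorem~\ref{te-add-gD11} inside the opposite Banach algebra $(\A,\circ)$ with $x\circ y=yx$, and then translate the resulting formula back into ordinary multiplication.

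First I would verify that the generalized Drazin inverse behaves transparently under reversal of multiplication. Invertibility in $(\A,\circ)$ coincides with invertibility in $\A$ (a $\circ$-inverse is a two-sided inverse), so the spectrum of any element is unchanged, and in particular $\A^{qnil}=(\A,\circ)^{qnil}$. The three defining identities of the generalized Drazin inverse are preserved by reversal, since $a^d a a^d=a^d$, $aa^d=a^d a$ and $a-a^2a^d\in\A^{qnil}$ are symmetric in the two factors once $aa^d=a^d a$ is used. Consequently $a^d$ in $\A$ is also the generalized Drazin inverse of $a$ in $(\A,\circ)$, and $a^\pi=1-aa^d$ is the same idempotent in both algebras.

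Next I would translate hypotheses and auxiliary elements. Using $x_1\circ\cdots\circ x_n=x_n\cdots x_1$, the condition $aba^\pi=a^\pi b^\pi bab^\pi a^\pi$ in $(\A,\circ)$ becomes exactly $a^\pi ba=a^\pi b^\pi abb^\pi a^\pi$ in $\A$. The products $a^\pi ba^\pi$, $a^\pi b$, $ba^\pi$, $aa^dbaa^d$, $aa^db$, $baa^d$ coincide in the two algebras (single products, or products with the central factor $aa^d=a^da$), so the generalized Drazin invertibility hypothesis is unchanged. The element $c=aa^d(a+b)$ of Theorem~\ref{te-add-gD11}, computed in $(\A,\circ)$, equals $(a+b)\cdot aa^d=e$; analogously $(a+b)\circ aa^d$ becomes $aa^d(a+b)$, and $aa^d\circ(a+b)\circ aa^d=aa^d(a+b)aa^d$ is fixed. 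Theorem~\ref{te-add-gD11} applied in $(\A,\circ)$ therefore yields the claimed equivalences.

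Finally I would derive the formula for $(a+b)^d$ by reading the formula (\ref{jed-add-gD11}) in $(\A,\circ)$ and reversing each monomial word. Every power stays a power ($x^n_\circ=x^n$), the idempotents $a^\pi, b^\pi, c^\pi=e^\pi$ remain the same elements, and each concatenation $x_1\cdots x_m$ appearing in (\ref{jed-add-gD11}) is replaced by $x_m\cdots x_1$. Performing this reversal on the six summands of (\ref{jed-add-gD11}) gives term by term $e^d$; $\sum a^\pi a^n(b^d)^{n+1}$; $-\sum e^dba^\pi a^n(b^d)^{n+1}$; $\sum\sum e^\pi e^n aa^d b a^k a^\pi(b^d)^{n+k+2}$ (using that $a^\pi$ and $a^k$ commute); $\sum (e^d)^{n+2} b a^\pi (a+b)^n b^\pi$; and $-\sum\sum(e^d)^{n+2}ba^\pi(a+b)^n a^{k+1}(b^d)^{k+1}$, which is the formula in the corollary. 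The only real obstacle is the careful bookkeeping of the reversal on each of the six summands, especially the double sums where the order of the two summation indices and the position of $aa^d$ must be tracked; once this is done the statement follows immediately from Theorem~\ref{te-add-gD11}.
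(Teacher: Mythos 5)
Your proposal is correct and is exactly the paper's route: the corollary is obtained by applying Theorem~\ref{te-add-gD11} in the opposite algebra $(\A,\circ)$, noting that generalized Drazin inverses and spectral idempotents are unchanged under reversal, and reversing each monomial in (\ref{jed-add-gD11}) (the paper states this duality without writing out the bookkeeping you supply). Your term-by-term translation, including the use of $a^\pi a^k=a^ka^\pi$ and $a^da=aa^d$ in the double sums, reproduces the stated formula correctly.
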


\begin{corollary} Let $a,b\in\A^d$ and $a^\pi ba=a^\pi aba^\pi$.
If $a^\pi ba^\pi$ (or $a^\pi b$ or $ba^\pi$ or $aa^dbaa^d$ or
$aa^db$ or $baa^d$) is generalized Drazin invertible, then
$$a+b\in\A^d \Leftrightarrow aa^d(a+b)\in\A^d \Leftrightarrow
e=(a+b)aa^d\in\A^d \Leftrightarrow aa^d(a+b)aa^d\in\A^d.$$ In this
case,
\begin{eqnarray*}(a+b)^d&=&e^d+\sum\limits_{n=0}^{\infty}a^\pi (-a)^{n}(b^d)^{n+1}
-\sum\limits_{n=0}^{\infty}e^dba^\pi (-a)^{n}(b^d)^{n+1}
\nonumber\\&+&\sum\limits_{n=0}^{\infty} e^\pi
e^naa^db\left(\sum\limits_{k=0}^{\infty}a^\pi(-a)^{k}
(b^d)^{k+1}\right)^{n+2} +\sum\limits_{n=0}^{\infty}
(e^d)^{n+2}ba^\pi (a+b)^n\nonumber\\
&-&\sum_{n=0}^{\infty}(e^d)^{n+2}ba^\pi(a+b)^{n+1}a^\pi
(-a)^{n}(b^d)^{n+1}.
\end{eqnarray*}
\end{corollary}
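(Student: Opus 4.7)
The plan is to deduce this statement as a direct translation of Theorem~\ref{te-add-gD41} from the reverse-multiplication Banach algebra $(\A,\circ)$ back to $\A$, exactly as the authors suggest in the remark preceding the statement. First I would record the structural facts about $(\A,\circ)$: it is a unital Banach algebra with the same norm, unit and spectrum of each element (in particular the same quasinilpotents and the same invertibles), and an element is generalized Drazin invertible in $(\A,\circ)$ if and only if it is so in $\A$, with the same $x^d$ verifying the three defining identities (which are symmetric in $x$ and $x^d$). Consequently $x^\pi=1-xx^d$ and the $n$-th power of $x$ are unaffected by passing from $\A$ to $(\A,\circ)$.

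Next I would check that the hypothesis $a^\pi ba=a^\pi aba^\pi$ matches, under the reversal rule $x_1\circ x_2\circ\cdots\circ x_k=x_k\cdots x_2 x_1$, the hypothesis $a\circ b\circ a^\pi=a^\pi\circ b\circ a\circ a^\pi$ of Theorem~\ref{te-add-gD41} read inside $(\A,\circ)$: the two sides reverse to $a^\pi ba$ and $a^\pi aba^\pi$ respectively. The auxiliary assumption that one of the six elements $a^\pi ba^\pi,\dots,baa^d$ is generalized Drazin invertible passes unchanged, since Drazin invertibility is intrinsic to $\A$. Theorem~\ref{te-add-gD41} applied in $(\A,\circ)$ then gives the equivalence of $a+b\in\A^d$ with the generalized Drazin invertibility of $aa^d\circ(a+b)$, $(a+b)\circ aa^d$ and $aa^d\circ(a+b)\circ aa^d$; unwinding the $\circ$-products yields $(a+b)aa^d=e$, $aa^d(a+b)$ and $aa^d(a+b)aa^d$, which is exactly the equivalence claimed.

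The remaining work is to translate formula~(\ref{jed-add-gD61}) summand by summand, using the reversal rule together with $c=aa^d\circ(a+b)=(a+b)aa^d=e$ (so $c^d=e^d$ and $c^\pi=e^\pi$) and the fact that $\circ$-powers coincide with ordinary powers. For example, the fourth summand
$$\sum_{n=0}^{\infty}\left(\sum_{k=0}^{\infty}(b^d)^{k+1}(-a)^{k}a^\pi\right)^{n+2}baa^d c^n c^\pi$$
first has its inner sum reversed to $\sum_{k}a^\pi(-a)^k(b^d)^{k+1}$, and the resulting element, raised to the $(n+2)$-th power in the usual sense, is then reversed against $b\,aa^d c^n c^\pi$ to give the nested expression $e^\pi e^n aa^d b(\,\cdot\,)^{n+2}$ that appears in the statement; the remaining five summands are handled in the same mechanical way. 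No new analytic idea is required, and the only real obstacle is clerical: keeping the order of the factors in each summand straight when passing between $\A$ and $(\A,\circ)$, especially in the fourth term where an inner sum must be reversed before being raised to a power and the outer product reversed.
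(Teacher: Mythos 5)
Your proposal is correct and is exactly the paper's route: the authors obtain this corollary by applying Theorem~\ref{te-add-gD41} in the reverse-multiplication algebra $(\A,\circ)$, which is precisely the translation you carry out (hypothesis $a\circ b\circ a^\pi=a^\pi\circ b\circ a\circ a^\pi$ becoming $a^\pi ba=a^\pi aba^\pi$, $c$ becoming $e=(a+b)aa^d$, and each summand of (\ref{jed-add-gD61}) reversed term by term). Your bookkeeping of the six auxiliary elements and of the nested fourth summand is accurate, so nothing is missing.
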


Dijana Mosi\'c

\bigskip

Faculty of Sciences and Mathematics, University of Ni\v s, P.O.
Box 224, 18000 Ni\v s, Serbia

\bigskip

{\it E-mail:} {\tt dijana@pmf.ni.ac.rs}

\bigskip

Daochang Zhang

\bigskip

College of Sciences, Northeast Electric Power University, Jilin 132012, China

\bigskip

{\it E-mail:} {\tt daochangzhang@126.com}

\end{document}